\def\beq{\begin{equation}}
\def\eeq{\end{equation}}
\def\beqa{\begin{eqnarray}}
\def\eeqa{\end{eqnarray}}
\newtheorem{thm}{Theorem} [section]
\newtheorem{lem}[thm]{Lemma}
\theoremstyle{definition}
\begin{document}
\title{A quantum space and some associated quantum groups}

\author[Muttalip \"{O}zav\c{s}ar ]{Muttalip \"{O}zav\c{s}ar}
\address{Department
of Mathematics \\ 
Yildiz Technical University \\
Davutpasa-Esenler\\
P.O. Box 34210 \\
Turkey}

\email{mozavsar@yildiz.edu.tr}
\keywords{Quantum Spaces; Quantum Groups; Derivation algebras.}

\begin{abstract}
In this paper, we first introduce a quantum $n$-space with a cocommutative Hopf algebra structure. Then it is shown that to this quantum $n$-space there corresponds a derivation algebra of $\sigma$-twisted derivations related to some algebra automorphisms $\sigma$ on the quantum $n$-space. Furthermore, we show that this derivation algebra is a noncommutative and non-cocommutative Hopf algebra, namely, a quantum group. Morever, for this the quantum $n$-space, we show how to construct a bicovariant differential calculus related to the derivation algebra.
\end{abstract}
\maketitle

\section{Introduction}
It is well known from \cite{journal-1:1988} that the notion of quantum group is used to specify deformed
versions of some classical Lie groups in the classical differential geometry. In addition, as the most concrete examples of noncommutative spaces, quantum spaces(groups) are also regarded by many as a pattern in generalizing  quantum deformed physics(for example, see [2-4]). In this context many efforts were displayed in order to construct noncommutative differential calculus on quantum spaces(groups) (see [5-9]). 
Constructing comprehensively the scheme of noncommutative differential calculus on a quantum group can be realized by using the setting of Hopf algebra \cite{journal-3:1989}, and this differential calculus is extended to the graded differential Hopf algebra \cite{journal-6:1993}. The main interest in this paper is first to introduce a quantum $n$-space which has noncommutative algebra relations, and as Hopf algebra is cocommutative one. Then, for each generator of this quantum $n$-space, we define a linear mapping which becomes the usual derivation operator(vector field) as the deformation parameter tends to 1. To make this fact algebraically meaningful, an algebra automorphism corresponding to each generator is given in such a way that each linear mapping has a twisted form of the usual Leibniz rule via the corresponding automorphism. So this property enables us to define a noncommutative and non-cocommutative Hopf algebra for the algebra generated by the twisted derivations and the corresponding automorphisms. We  also show that these twisted derivation operators are consistent with the Hopf structure of quantum $n$-space in the sense of their functionality in constructing a bicovariant differential calculus on the quantum $n$-space.  
\section{Preliminaries}
Throughout the paper, the complex numbers $\mathbb{C}$ will be always ground field for all our objects. An associative unital algebra is a triple 
$(\mathcal{A}, m, \eta)$ as a collection of a linear space $\mathcal{A}$, a linear multiplication mapping $m:\mathcal{A}\otimes\mathcal{A}\rightarrow \mathcal{A}$
and a unit mapping $\eta:\mathbb{C} \rightarrow \mathcal{A}$ with the well known axioms $m\circ (m\otimes \mbox{id})=m \circ (\mbox{id}\otimes m)$ and $m \circ (\eta\otimes \mbox{id)}=m\circ (\mbox{id}\otimes \eta)$, where $\mbox{id}$ denotes identity mapping. A coassociative \textit{coalgebra} $({\mathcal{A}}, \Delta, \varepsilon)$ is
defined by inverting all arrows in the diagrams of data and axioms for $(\mathcal{A}, m, \eta)$. This is equivalent to saying that we have a coproduct $\Delta:\mathcal{A}\rightarrow \mathcal{A}\otimes\mathcal{A}$
and a counit $\varepsilon:\mathcal{A}\rightarrow \mathbb{C}$ satisfying two axioms
\begin{equation}\label{1}
(\Delta \otimes \mbox{id})\circ \Delta = (\mbox{id} \otimes \Delta)\circ \Delta
\end{equation}
\begin{equation}\label{2}
(\varepsilon \otimes \mbox{id})\circ\Delta = \mbox{id}=(\mbox{id} \otimes \varepsilon)\circ \Delta.
\end{equation}
 
A \textit{bialgebra} $\mathcal{A}$ is both a unital associative algebra and coalgebra such that $\Delta$ and $\varepsilon$ are both algebra homomorphisms with $\Delta(1_{\mathcal{A}})=1_{\mathcal{A}}\otimes 1_{\mathcal{A}}$
and  $\varepsilon(1_{\mathcal{A}})=1_K $. Finally, a \textit{Hopf algebra} is a bialgebra $\mathcal{A}$ endowed with an algebra homomorphism called ``antipode mapping" $S:\mathcal{A}\rightarrow {\mathcal{A}}^{\text{op}}$ enjoying 
\begin{equation}\label{3}
  m\circ (S\otimes \mbox{id})\circ\Delta = \eta\circ\varepsilon= m\circ(\mbox{id} \otimes S)\circ\Delta,
\end{equation}
where ${\mathcal{A}}^{\text{op}}$ is just the opposite algebra of $\mathcal{A}$.

Let $\mathcal{A}$ be an algebra. A $\mathbb{C}$-vector space $\Omega$ is called a right $\mathcal{A}$-module if there exists a linear mapping $\varphi_R: \Omega\otimes\mathcal{A}\rightarrow \Omega$ with $\varphi_R\circ(\varphi_R\otimes \mbox{id})=\varphi_R\circ(\mbox{id}\otimes m )$ and $\varphi_R\circ(\mbox{id}\otimes\eta)=\mbox{id}$. Similarly,
the vector space $\Omega$ is called a left $\mathcal{A}$-module if there is a linear mapping $\varphi_L:\mathcal{A}\otimes \Omega\rightarrow \Omega$ satisfying the conditions $\varphi_L\circ(\mbox{id}\otimes \varphi_L)=\varphi_L\circ(m\otimes \mbox{id})$ and $\varphi_L\circ(\eta\otimes\mbox{id})=\mbox{id}$. If the $\Omega$ is both a right $\mathcal{A}$-module with the right action $\varphi_R$ and a left $\mathcal{A}$-module with the left action $\varphi_L$ such that they commute, that is, $\varphi_L \circ (\mbox{id}\otimes \varphi_R)=\varphi_R \circ (\varphi_L\otimes \mbox{id})$, then we call $\Omega$ an $\mathcal{A}$-bimodule.
Let $\mathcal{A}$ be a Hopf algebra. A \textit{right comodule} over $\mathcal{A}$ is a $\mathbb{C}$-vector space $\Omega$ equipped with a linear mapping
$\Delta_R : \Omega \longrightarrow \Omega \otimes \mathcal{A}$ satisfying 
\begin{equation}
\begin{aligned}
&(\Delta_R \otimes \mbox{id}) \circ \Delta_R  = (\mbox{id} \otimes \Delta_{\mathcal{A}}) \circ \Delta_R, \\
&\textsl{m} \circ(\mbox{id} \otimes \varepsilon_{\mathcal{A}}) \circ \Delta_R = \mbox{id}.
\end{aligned} 
\end{equation}
A \textit{left comodule} over $\mathcal{A}$ is a $\mathbb{C}$-vector space $\Omega$ equipped with a linear mapping
$\Delta_L : \Omega \longrightarrow \mathcal{A} \otimes \Omega $ such that 
\begin{equation}
\begin{aligned}
&(\mbox{id} \otimes \Delta_L) \circ \Delta_L=(\Delta_{\mathcal{A}} \otimes \mbox{id}) \circ \Delta_L, \\
 & \textsl{m} \circ(\varepsilon_{\mathcal{A}} \otimes \mbox{id})\circ \Delta_L = \mbox{id}. 
\end{aligned} 
\end{equation}
Let $\Omega$ be left and right comodule over $\mathcal{A}$ with the relevant linear mappings $\Delta_L$ and $\Delta_R$. If $\Delta_L$ and $\Delta_R$ hold $( \mbox{id} \otimes \Delta_R )\circ \Delta_L=(\Delta_L \otimes \mbox{id})\circ \Delta_R$, that is, they commute, then one says that $\Omega$ is a \textit{bicomodule} over $\mathcal{A}$. A \textit{bicovariant bimodule} $\Omega$ is a bicomodule $\Omega$ such that the relevant mappings $\Delta_L$ and $\Delta_R$  hold the condition of compatibility with module structure of $\Omega$:
\begin{equation*}
\begin{aligned}
&\Delta_L(apb) =\Delta_{\mathcal{A}}(a)\Delta_L(p)\Delta_{\mathcal{A}}(b), \\
&\Delta_R(apb) =\Delta_{\mathcal{A}}(a)\Delta_R(p)\Delta_{\mathcal{A}}(b)
\end{aligned}
\end{equation*}
for all $ a,b \in \mathcal{A} $ and $ p \in \Omega $.

Let $\Omega^1$ be a bimodule over an algebra $\mathcal{A}$ and ${\sf d}$ be a linear mapping from $\mathcal{A}$ to $\Omega^1$. The pair $(\Omega^1, {\sf d})$ is called a \textit{first order differential calculus} over $\mathcal{A}$ if the Leibniz rule ${\sf d}(ab)={\sf d}(a)b+a{\sf d}(b)$ holds for all $a,b \in \mathcal{A}$. Note that $\Omega^1$ is the linear span of elements $a{\sf d}(b)$ or ${\sf d}(a')b', a,a', b,b' \in \mathcal{A}$.
Morever, we say that a first order differential calculus $(\Omega^1, {\sf d})$ is bicovariant if the bimodule $\Omega^1$ is bicovariant together with the coactions $\Delta_L({\sf d}(a))=(\mbox{id}\otimes {\sf d})(\Delta_{\mathcal{A}}(a))$ and $\Delta_R({\sf d}(a))=({\sf d}\otimes \mbox{id}) (\Delta_{\mathcal{A}}(a)), a \in \mathcal{A}$, where $\Delta_L|_{\mathcal{A}}=\Delta_{\mathcal{A}}$ and
 $\Delta_R|_{\mathcal{A}}=\Delta_{\mathcal{A}}$(see \cite{journal-3:1989}). Let $\Omega^n$ be the space of differential $n$-forms. Let any element of $\Omega^n$ be denoted by $a{\sf d}(a_1)\wedge{\sf d}(a_2)\wedge...\wedge{\sf d}(a_n)$ or ${\sf d}(a_1)\wedge{\sf d}(a_2)\wedge...\wedge{\sf d}(a_n)a$ for $a$,$a_i$'s$\in \mathcal{A}$ where the multiplication $\wedge$ is defined as $u\wedge v \in \Omega^{m+n}$ for $u\in \Omega^{m}$ and $v\in \Omega^{n}$. Thus, the exterior algebra of all higher order differential forms (or differential graded algebra) is a $\mathbb{N}_0$-graded algebra $\Omega^{\wedge}=\bigoplus_{n\geq 0}\Omega^n$, $\Omega^0={\mathcal{A}}$, with the exterior differential mapping ${\sf d}:\Omega^{\wedge}\rightarrow \Omega^{\wedge}$ of degree 1 satisfying ${\sf d}^2=0$ and the graded Leibniz rule ${\sf d}(w_1\wedge w_2)={\sf d}(w_1)\wedge w_2+(-1)^n w_1 \wedge{\sf d}(w_2), \ w_1 \in \Omega^n, \ w_2 \in \Omega^{\wedge}$. Thus the definition of bicovariant first order differential calculus can be extented to the differential graded algebra $\Omega^{\wedge}$ in a similar manner. Morever, a bicovariant differential algebra $\Omega^{\wedge}$ has a Hopf algebra structure induced by the coproduct $\hat{\Delta}=\Delta_L+\Delta_R$ (for more details see \cite {journal-6:1993, journal-7:1999}).

Let $\sigma$ be an automorphism of an algebra $\mathcal{A}$. For ${\mathcal{A}}$, a $\sigma$-derivation is a linear mapping $\partial:{\mathcal{A}}\rightarrow {\mathcal{A}}$ with $ \partial(ab) = \partial(a)b + \sigma(a)\partial(b)$ for all $a, b \in \mathcal{A}$.

Let us recall that in Manin's terminology a quantum $n$-space is a finitely generated quadratic algebra with noncommutative algebra relations \cite{journal-8:1988, journal-9:1989}. That is, a quantum $n$-space is an associative $\mathbb{C}$-algebra generated by $x_1,x_2,...,x_n$ satisfying the following commutation relations:
\begin{equation}\label{7}
x_ix_j=q_{ij}x_jx_i,~i,j=1,2,...,n,
\end{equation}
where $q_{ij}$'s are nonzero complex numbers with $q_{ij}^{-1}=q_{ji}$.

\section{A Quantum $n$-Space with Hopf Algebra Structure}
This section is concerned with construction of a quantum $n$-space on which a Hopf algebra can be given. The construction of commutation relation of this quantum $n$-space is given by using some means such as bicharacter and 2-cocycle on additive group $\mathbb{Z}^n$ as used in \cite{journal-10:2000}. However, our quantum $n$-space differs from that of \cite{journal-10:2000} in two aspects. First, we make a bicharacter which yields a commutation relation that is not only based on the powers of generators but also their indices, and second, we take a grouplike generator into account.
Let $\alpha=(\alpha_1,...,\alpha_n), \beta=(\beta_1,...,\beta_n)\in\mathbb{Z}^n$ be any two integer $n$-tuples, and consider a mapping $\ast:\mathbb{Z}^n \times \mathbb{Z}^n \rightarrow \mathbb{Z}$ defined through
\begin{equation}\label{8}
\alpha \ast \beta=\sum^{n-1}_{j=1}\sum_{i>j}(j-i)\alpha_i\beta_j.
\end{equation}
One can easily show that the mapping $\ast$ holds the following distributive laws:
\begin{eqnarray}
(\alpha+\beta)\ast \gamma&=&\alpha\ast \gamma+\beta\ast\gamma \nonumber, \\
\alpha\ast(\beta+\gamma)&=&\alpha\ast\beta+\alpha\ast\gamma \nonumber.
\end{eqnarray}
The mapping $\ast$ also satisfies
\begin{equation*}
\varepsilon_i\ast\beta =\sum_{s<i}(s-i)\beta_s~(1\leq i \leq n),
\end{equation*}
\begin{equation*}
\beta\ast\varepsilon_i =\sum_{s>i}(i-s)\beta_s~(1\leq i \leq n),
\end{equation*}
\begin{equation*}
(\varepsilon_i-\varepsilon_{i+1})\ast\beta=\sum^{i}_{s=1}\beta_s~(1\leq i < n),
\end{equation*}
\begin{equation*}
\beta\ast(\varepsilon_i-\varepsilon_{i+1})=-\sum^{n}_{s=i+1}\beta_s~(1\leq i < n),
\end{equation*}
where $\varepsilon_i=(\delta_{1i},...,\delta_{ni})~(1\leq i\leq n)$ is a basis of $\mathbb{Z}^n$ as a $\mathbb{Z}$-module.
Let $q$ be a nonzero complex constant. Now we define a mapping 
$\eta:\mathbb{Z}^n\times \mathbb{Z}^n\rightarrow \mathbb{C}^*$ as follows
\begin{equation}\label{9}
\eta(\alpha,\beta)=q^{\alpha\ast\beta-\beta\ast \alpha}.
\end{equation}
In particular, we can verify that $\eta(\varepsilon_i,\varepsilon_j)=q^{j-i}$. In fact, since the following properties hold
\begin{equation}\label{10}
\eta(\alpha+\beta,\gamma)=\eta(\alpha, \gamma)\eta(\beta,\gamma)
\end{equation}
\begin{equation}\label{11}
\eta(\alpha,\beta+\gamma)=\eta(\alpha, \beta)\eta(\alpha,\gamma)
\end{equation}
\begin{equation}\label{12}
\eta(\alpha,0)=1=\eta(0,\alpha)
\end{equation}
\begin{equation}\label{13}
\eta(\alpha,\beta)\eta(\beta,\alpha)=1=\eta(\alpha,\alpha),
\end{equation}
the mapping $\eta$ is a bicharacter of the additive group $\mathbb{Z}^n$. We are now in a position to give our quantum $n$-space. This quantum $n$-space is generated by $x_1,...,x_n$ having commutation relations which we write as
\begin{equation}\label{14}
x_ix_j = \eta(\varepsilon_i,\varepsilon_j)x_jx_i,~ 1\leq i,j\leq n.
\end{equation}
This space or, rather, the polynomial function ring is formally defined by the following ring
\begin{equation}\label{15}
{\mathcal{A}}_q(n)=\mathbb{C}\left\langle x_1,...,x_n\right\rangle/\mathcal{I},
\end{equation}
where $\mathbb{C}\left\langle x_1,...,x_n\right\rangle$ means an associative $\mathbb{C}$-algebra freely generated by $x_1,...,x_n$ and $\mathcal{I}$ is an ideal generated by the relations (\ref{14}).
This is a deformation of the usual commutative space corresponding to the case $q=1$. Let $\mathbb{Z}^n_+$ show  
the set of nonnegative-integer $n$-tuples and $x^{\alpha}=x_1^{\alpha_1}\cdot\cdot\cdot x_n^{\alpha_n}$ any nonzero monomial in ${\mathcal{A}}_q(n)$ where $\alpha=(\alpha_1,...,\alpha_n) \in \mathbb{Z}^n_+ $. So we assume that the quantum $n$-space has a PBW basis whose elements are of the ordered form $x^{\alpha}=x_1^{\alpha_1}\cdot\cdot\cdot x_n^{\alpha_n}$. 
That is, the vector space ${\mathcal{A}}_q(n)$ can be given as a direct sum of the vector subspaces consisting of the above ordered monomials of homogeneity degree $m$. From now on, our all linear mappings will be defined by taking into account the above ordering. Morever, we easily derive the commutation relation between two monomials $x^{\alpha},x^{\beta}$ with the help of $\eta$ as follows: 
\begin{equation*}
x^{\alpha}x^{\beta}= \eta(\alpha,\beta)x^{\beta}x^{\alpha}.
\end{equation*}
We note at this point that since $\eta$ holds the conditions $(\ref{10}-\ref{13})$, $\eta$-commutativity is well defined, and $\eta$ is a 2-cocycle on the additive group $\mathbb{Z}^n$, meaning that it satisfies
\begin{equation}\label{cocyc}
\eta(\alpha,\beta)\eta(\alpha+\beta,\gamma)=\eta(\beta,\gamma)\eta(\alpha, \beta+\gamma),
\end{equation}
which ensures the compatibility of $\eta$-commutativity with associativity rule. So the above construction shows that the quantum $n$-space ${\mathcal{A}}_q(n)$ is an associative $\eta$-commutative  algebra if we set that $x^0=1$ and $x^{\alpha}=0$ for $\alpha\notin \mathbb{Z}^n_{+}$.
For more details on more general algebras such as $\Gamma-$graded $\eta-$commutative algebras, we refer the reader to the series of papers \cite{journal-11:1979,journal-12:1983}. 
The multiplication of homogeneous elements in the tensor products of such $\Gamma-$graded $\eta-$commutative algebras can be defined in the manner $(a\otimes b)(c\otimes d)=\eta(g_b,g_c)ac\otimes bd$ where $g_b$ and $g_c$ are degree elements in $\Gamma$ assigned to the degrees for elements $b$ and $c$, respectively. But, the usual multiplication in ${\mathcal{A}}_q(n)\otimes {\mathcal{A}}_q(n)$ must be used in the setting of Hopf algebra given below, while the algebra over it is $\Gamma-$ graded $\eta-$commutative. In other words, if we use this multiplication instead of the usual one in the setting of Hopf algebra, then the coproduct $\Delta$ does not leave invariant the commutation relation (\ref{14}). This property makes the quantum space defined above more interesting to us.
From now on, for multiplication in ${\mathcal{A}}_q(n)\otimes {\mathcal{A}}_q(n)$, we use  the usual one defined by $(a\otimes b)(c\otimes d)=ac\otimes bd$ throughout the paper.
\begin{thm}
Let us extend ${\mathcal{A}}_q(n)$ by the inverse $x_1^{-1}$. Given two algebra homomorphisms $\Delta:{\mathcal{A}}_q(n)\rightarrow {\mathcal{A}}_q(n)\otimes {\mathcal{A}}_q(n)$, $\varepsilon:{\mathcal{A}}_q(n)\rightarrow \mathbb{C}$ and an algebra antihomomorphism $S:{\mathcal{A}}_q(n)\rightarrow {\mathcal{A}}_q(n)$ , acting on the generators $x_1,x_2,...,x_n$ as follows: 
\begin{equation}\label{16}
\begin{aligned}
&\Delta(x_1^{\pm1})=x_1^{\pm1}\otimes x_1^{\pm1},\\
&\Delta(x_i)=x_i\otimes x_1+x_1\otimes x_i,~1<i\leq n, \\
&\varepsilon(x_1)= 1,~ \varepsilon(x_i)=0,~1<i\leq n, \\
&S(x_1)=x_1^{-1},~S(x_i)=-x_1^{-1}x_ix_1^{-1},~1<i\leq n,
\end{aligned}
\end{equation}
then the extended quantum space ${\mathcal{A}}_q(n)$ is a cocommutative Hopf algebra under the mappings $\Delta$, counit $\varepsilon$ and antipode $S$.
\end{thm}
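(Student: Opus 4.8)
The plan is to regard $\Delta,\varepsilon$ as algebra homomorphisms and $S$ as an algebra antihomomorphism specified on the generators $x_1^{\pm1},x_2,\dots,x_n$, and to argue in two stages. First I would establish that these three prescriptions are \emph{well defined} on the extended algebra, i.e.\ that they are compatible with the defining relations $x_ix_j=\eta(\varepsilon_i,\varepsilon_j)x_jx_i$ of \eqref{14} and with $x_1x_1^{-1}=x_1^{-1}x_1=1$. Once this is done, the coalgebra axioms \eqref{1}, \eqref{2} and cocommutativity become identities between algebra homomorphisms $\mathcal{A}_q(n)\to\mathcal{A}_q(n)^{\otimes k}$, so they may be verified on generators alone; and the antipode axiom \eqref{3}, although its two sides are convolutions rather than homomorphisms, may likewise be checked on generators, since the fact that $S$ is an antihomomorphism makes $\{a:m\circ(S\otimes\mbox{id})\circ\Delta(a)=(\eta\circ\varepsilon)(a)\}$ a subalgebra containing $1$.

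The crucial verification is the well-definedness of $\Delta$, and here the choice of the \emph{usual} product $(a\otimes b)(c\otimes d)=ac\otimes bd$ on $\mathcal{A}_q(n)\otimes\mathcal{A}_q(n)$ is decisive. For $1<i,j\le n$ I would expand
\[
\Delta(x_i)\Delta(x_j)=x_ix_j\otimes x_1^2+x_ix_1\otimes x_1x_j+x_1x_j\otimes x_ix_1+x_1^2\otimes x_ix_j
\]
and reorder each tensor factor into PBW form using $x_sx_t=q^{t-s}x_tx_s$. The two diagonal terms immediately produce the factor $q^{j-i}=\eta(\varepsilon_i,\varepsilon_j)$, while the mixed terms $x_ix_1\otimes x_1x_j$ and $x_1x_j\otimes x_ix_1$ each pick up the factor $q^{(1-i)+(j-1)}=q^{j-i}$ and exchange their roles, so that altogether $\Delta(x_i)\Delta(x_j)=q^{j-i}\Delta(x_j)\Delta(x_i)$, as required. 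Compatibility of $\Delta$ with $x_1x_i=q^{i-1}x_ix_1$ and with $x_1x_1^{-1}=1$ is immediate from the grouplikeness of $x_1^{\pm1}$, and for $\varepsilon$ the relations are preserved trivially since $\varepsilon(x_i)=0$ for $i>1$.

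For $S$, well-definedness amounts to $S(x_j)S(x_i)=\eta(\varepsilon_i,\varepsilon_j)S(x_i)S(x_j)$, obtained by applying the antihomomorphism $S$ to \eqref{14}. Substituting $S(x_i)=-x_1^{-1}x_ix_1^{-1}$ and commuting the powers of $x_1^{-1}$ past $x_i,x_j$ via $x_1^{-1}x_s=q^{1-s}x_sx_1^{-1}$, both $S(x_j)S(x_i)$ and $q^{j-i}S(x_i)S(x_j)$ reduce to the common expression $q^{\,2-i-j}\,x_1^{-1}x_ix_jx_1^{-3}$, confirming well-definedness. It then remains to check the axioms on generators: coassociativity gives $x_1\otimes x_1\otimes x_1$ on $x_1$ and the symmetric sum $x_i\otimes x_1\otimes x_1+x_1\otimes x_i\otimes x_1+x_1\otimes x_1\otimes x_i$ on $x_i$ from both $(\Delta\otimes\mbox{id})\circ\Delta$ and $(\mbox{id}\otimes\Delta)\circ\Delta$; the counit axiom follows from $\varepsilon(x_1)=1,\varepsilon(x_i)=0$; the antipode axiom reduces on $x_i$ to $S(x_i)x_1+S(x_1)x_i=-x_1^{-1}x_i+x_1^{-1}x_i=0$ and to its mirror image $x_iS(x_1)+x_1S(x_i)=0$, while on $x_1^{\pm1}$ it reads $x_1^{\mp1}x_1^{\pm1}=1$; and cocommutativity is evident from the invariance of $\Delta(x_i)$ under the flip $a\otimes b\mapsto b\otimes a$.

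The hard part will be the well-definedness of $\Delta$, which is exactly the phenomenon singled out in the discussion preceding the statement: with the $\eta$-twisted product on $\mathcal{A}_q(n)\otimes\mathcal{A}_q(n)$ the two mixed terms would fail to recombine and $\Delta$ would not preserve \eqref{14}. The computation works only because one uses the untwisted product, and it relies on the bicharacter identities \eqref{10}--\eqref{13} to ensure that the $q$-exponents generated by the reorderings cancel precisely. The only other delicate point is the bookkeeping of the inverse $x_1^{-1}$ in the antipode computation, which is routine once the rule $x_1^{-1}x_s=q^{1-s}x_sx_1^{-1}$ is recorded.
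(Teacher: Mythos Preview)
Your proposal is correct and follows essentially the same route as the paper: both verify that $\Delta$, $\varepsilon$, $S$ respect the relations \eqref{14} (with the same expansion of $\Delta(x_i)\Delta(x_j)$ into four tensor terms and the same reordering via $x_sx_t=q^{t-s}x_tx_s$), then reduce the Hopf algebra axioms to checks on generators. Your write-up is in fact slightly more careful than the paper's in two places: you explain \emph{why} the antipode axiom may be checked on generators (the convolution $S\!*\!\mathrm{id}$ restricted to a subalgebra), and you carry out the well-definedness of $S$ and the antipode identity on $x_i$ explicitly, whereas the paper declares these clear; conversely, the paper records the extra consequences $\tau\circ(S\otimes S)\circ\Delta=\Delta\circ S$, $\varepsilon\circ S=\varepsilon$, and $S^2=\mathrm{id}$, which you correctly omit since they are not part of what must be proved.
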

\begin{proof}
For any basis element $x^{\alpha}$ of the extended quantum space ${\mathcal{A}}_q(n)$, let $\alpha\in \mathbb{Z}\times \mathbb{Z}^{n-1}_{+}$. Thus, it is clear that the extended ${\mathcal{A}}_q(n)$ is again $\eta$-commutative algebra in the assumption that $x^0=1$ and $x^{\alpha}=0$ if the last ($n$-1)-tuple of $\alpha$ $(\alpha_{n-1},...,\alpha_n)\notin \mathbb{Z}^n_{+}$. So, it must be first checked whether the mappings $\Delta, \varepsilon$ and $S$ leave invariant the commutation relation (\ref{14}). For $\varepsilon$, it is clear. For $\Delta$, we shall check whether $\Delta(x_ix_j-\eta(\varepsilon_i,\varepsilon_j)x_jx_i)=0$ holds for all $i,j=1,2,...,n$. It is seen in the following:
\begin{equation}\label{17}
\begin{aligned}
\Delta(x_ix_j)&=(x_i\otimes x_1+x_1\otimes x_i)(x_j\otimes x_1+x_1\otimes x_j)\\
&=(x_ix_j\otimes x_1^2+x_ix_1\otimes x_1x_j+x_1x_j\otimes x_ix_1+x_1^2\otimes x_ix_j\\
&=\eta(\varepsilon_i,\varepsilon_j)(x_jx_i\otimes x_1^2+x_1x_i\otimes x_jx_1+x_jx_1\otimes x_1x_i+x_1^2\otimes x_jx_i)\\
&=\eta(\varepsilon_i,\varepsilon_j)\Delta(x_j)\Delta(x_i)
\end{aligned}
\end{equation}
for $1<i,j\leq n$ with $i\neq j$ and
\begin{eqnarray}
\Delta(x_1x_i)&=&(x_1\otimes x_1)(x_i\otimes x_1+x_1\otimes x_i)\nonumber \\
&=&x_1x_i\otimes x_1^2+x_1^2\otimes x_1x_i \nonumber \\
&=&\eta(\varepsilon_1,\varepsilon_i)(x_ix_1\otimes x_1^2+x_1^2\otimes x_ix_1)\nonumber \\
&=&\eta(\varepsilon_1,\varepsilon_i)\Delta(x_i)\Delta(x_1) \nonumber
\end{eqnarray}
and
\begin{eqnarray}
\Delta(x_1^{-1}x_i)&=&(x_1^{-1}\otimes x_1^{-1})(x_i\otimes x_1+x_1\otimes x_i)\nonumber \\
&=&x_1^{-1}x_i\otimes 1+1\otimes x_1^{-1}x_i \nonumber \\
&=&\eta(\varepsilon_i,\varepsilon_1)(x_ix_1^{-1}\otimes 1+1\otimes x_ix_1^{-1})\nonumber \\
&=&\eta(\varepsilon_i,\varepsilon_1)\Delta(x_i)\Delta(x_1^{-1}) \nonumber
\end{eqnarray}
for $1<i\leq n$.
From the definition of $\Delta$, it is readily apparent that $\Delta$ holds the rule of cocommutativity $\Delta=\tau \circ \Delta$, where $\tau$ is the twisting mapping defined by $\tau(a\otimes b)=b\otimes a$. From the actions of $\Delta, \varepsilon, S$ on the generators, it is also clear that the mappings $\Delta, \varepsilon,S$ fulfill the Hopf algebra axioms (\ref{1}-\ref{3}).
Thus we can note that $S$ is an anti-homomorphism at the level of coalgebra, meaning that
\begin{equation}\label{18}
\tau \circ (S\otimes S)\circ \Delta=\Delta \circ S.
\end{equation}
 Indeed, from (\ref{16}),  
\begin{equation}\label{19} 
\begin{aligned}
\left (\tau \circ (S\otimes S)\circ \Delta\right)(x_i) &=\left(\tau \circ (S\otimes S)\right )(x_i\otimes x_1+x_1\otimes x_i), \\
&= \tau\left (-x_1^{-1}x_ix_1^{-1}\otimes x_1^{-1}-x_1^{-1} \otimes x_1^{-1}x_ix_1^{-1}\right )\\
&= -x_1^{-1} \otimes x_1^{-1}x_ix_1^{-1}-x_1^{-1}x_ix_1^{-1}\otimes x_1^{-1} 
\end{aligned}
\end{equation}
Also
\begin{equation}\label{20}
\begin{aligned}
\left(\Delta \circ S\right)(x_i)&=\Delta \left(-x_1^{-1}x_ix_1^{-1}\right) \\
&=-\left( x_1^{-1}\otimes x_1^{-1}  \right)\left( x_i\otimes x_1+x_1\otimes x_i \right)\left( x_1^{-1}\otimes x_1^{-1} \right) \\
&= -x_1^{-1} \otimes x_1^{-1}x_ix_1^{-1}-x_1^{-1}x_ix_1^{-1}\otimes x_1^{-1}.
\end{aligned}
\end{equation}
 We see from (\ref{19}) and (\ref{20}) that the equality (\ref{18}) exists for the generator $x_i,~ 1<i\leq n$. It is also shown in a similar manner that $ \left(\tau \circ (S\otimes S)\circ \Delta\right)(x_1)=\left(\Delta \circ S\right)(x_1)$.
Finally we observe equalities $ \varepsilon \circ S=\varepsilon $ and $S^2=\mbox{id}$.
\end{proof}

\section{Derivation Algebra on ${\mathcal{A}}_q(n)$}
In this section, we will show that, in presence of automorphisms, some derivation operators on ${\mathcal{A}}_q$ with deformed Leibniz rules given by the automorphism are related with a bicovariant differential calculus on ${\mathcal{A}}_q$.
Let us define a linear mapping $\partial_q/ \partial x_i$ of ${\mathcal{A}}_q$, defined through 
\begin{equation}\label{21}
\frac{\partial_q}{\partial x_i}\left(x^\alpha\right)=\eta(\stackrel{\leftarrow}{\alpha_i},\varepsilon_i )\alpha_ix^{\alpha-\varepsilon_i} \ (i=1,...,n),
\end{equation}
where $\stackrel{\leftarrow}{\alpha_i}=(\alpha_1,..\alpha_{i-1},0,...,0)$ with $\stackrel {\leftarrow}{\alpha_1}=0$. Note that $\frac{\partial_q}{\partial x_i}\left(1\right)=0 $, and even if we extend ${\mathcal{A}}_q$ by $x_1^{-1}$, the the algebra is again $\eta$-commutative. This says that the rule \eqref{21} can be also defined for $x_1^{z}$ where $z$ is a negative integer, for example, $\frac{\partial_q}{\partial x_1}(x_1^{z})=zx_1^{z-1}$ as the usual one.
For simplicity of notation, we let $\partial_i$ denote briefly $\partial_q/ \partial x_i$. From the definition (\ref{21}) one has the following commutation rule between two mappings $\partial_i , \partial_j$ with respect to their composition as follows:
\begin{equation}\label{22}
\partial_i \partial_j=\eta(\varepsilon_i,\varepsilon_j)\partial_j \partial_i.
\end{equation} 
We also note that the mapping $\partial_i$ reduces to the usual partial derivative operator with respect to $x_i$ when $q\rightarrow 1$. It is intriguing at this point to ask whether the mappings $\partial_i, i=1,...,n$ are deformed derivations acting on ${\mathcal{A}}_q(n)$. To answer this, we investigate what kind of Leibniz rule exists for the each mapping. Really, each $\partial_{i}$ has the following deformed Leibniz rule when it acts on $x^{\alpha}g$, where $g$ is any element of ${\mathcal{A}}_q$:
\begin{equation}\label{der}
\partial_i(x^\alpha g)=\partial_i(x^\alpha)g+\sigma_i(x^{\alpha})\partial_i(g),
\end{equation}
where for any $\beta \in \mathbb{Z}^{n}$, the mapping $\sigma_{\beta}:{\mathcal{A}}_q(n)\rightarrow {\mathcal{A}}_q(n)$ is an algebra automorphism acting on $x^{\alpha}$
\begin{equation}\label{ato}
\sigma_{\beta}(x^{\alpha})=\eta(\alpha, \beta)x^{\alpha} \ \ \text{for all}\ \ x^{\alpha}\in {\mathcal{A}}_q(n),
\end{equation}
and $\sigma_i$ denotes $\sigma(\varepsilon_i),i=1,...,n$. Note that this rule can be also defined for negative powers of $x_1$.

From the properties of $\eta$, it is obviously seen that for all $\alpha, \beta\in \mathbb{Z}^n$, the relation between automorphisms $\sigma_{\alpha},\sigma_{\beta}$ appears as commutative one
\begin{equation}\label{24}
\sigma_{\alpha}\sigma_{\beta}=\sigma_{\alpha +\beta}=\sigma_{\beta}\sigma_{\alpha}  
\end{equation}
\begin{equation}\label{25}
\sigma(0)=1, \  \ \sigma_{\alpha}^{-1}=\sigma_{-\alpha},
\end{equation}
and the relation of an automorphism $\sigma(\alpha)$ with the mapping $\partial_i, i=1,...,n$ is of the form
\begin{equation}\label{26}
\sigma_{\alpha}\partial_i= \eta(\alpha, \varepsilon_i)\partial_i\sigma_{\alpha} 
\end{equation}

Considering the above constructions, we give an algebra ${\mathcal{D}}_q(2n)$ freely generated by $\partial_1,....,\partial_n, \sigma_1,...,\sigma_n$ enjoying the relations \eqref{22}, \eqref{24} and \eqref{26}. Here we note that ${\mathcal{D}}_q(2n)$ is a deformation of the algebra ${\mathcal{D}}(n)$ generated by the usual partial derivations $\partial_1,...,\partial_n$ because the mappings $\sigma_i$ reduce to the idendity mapping when $q\rightarrow 1$. It is well known that $\mathcal{D}(n)$ has a Hopf algebra structure via the coproduct $\Delta$, counit $\varepsilon$ and antipode $S$ acting on the generators as $\Delta(\partial_i)=\partial_i\otimes \mbox{id}+\mbox{id}\otimes \partial_i$, $\varepsilon(\partial_i)=0, S(\partial_i)=-\partial_i$. Based on the above investigations, one can investigate whether the deformed derivation algebra $\mathcal{D}_q(2n)$ has a Hopf algebra structure as a deformation of the Hopf algebra structure for ${\mathcal{D}}(n)$. In fact, the answer is hidden in the structure of automorphisms and Leibniz rule of the derivations. That is, using the definitions $m(\Delta(\partial_a)(f\otimes g)):= \partial_a(f g)$ and $m(\Delta(\sigma_a)(f\otimes g)):=\sigma_a(fg)= \sigma_a(f)\sigma_a(g)$ together with (\ref{der}) implies that the coproduct $\Delta$ acts on the generators as $\Delta(\sigma_i)=\sigma_i\otimes \sigma_i$ and $\Delta(\partial_i)=\partial_i\otimes id+\sigma_i\otimes \partial_i$.
 Therefore we can give the following theorem:
\begin{thm}
The derivation algebra $\mathcal{D}_q(2n)$ can be equipped with a Hopf algebra structure due to the coproduct $\Delta:{\mathcal{D}}_{q}(2n)\rightarrow {\mathcal{D}}_{q}(2n)\otimes \mathcal{D}_q(2n)$:
\begin{equation}\label{28}
\begin{aligned}
&\Delta(\sigma_i)=\sigma_i\otimes \sigma_i\\
&\Delta(\partial_i)=\partial_i\otimes \mbox{id}+\sigma_i\otimes \partial_i,
\end{aligned}
\end{equation}
the counit $\varepsilon:{\mathcal{D}}_{q}(2n)\rightarrow \mathbb{C}$
\begin{equation}\label{29}
\varepsilon(\sigma_i)=1, \ \varepsilon(\partial_i)=0,
\end{equation}
and the antipode $S:{\mathcal{D}}{q}(2n)\rightarrow {\mathcal{D}}{q}(2n)$
\begin{equation}\label{30}
S(\sigma_i)=\sigma_i^{-1}, \ S(\partial_i)=-\sigma_i^{-1}\partial_i,
\end{equation}
where $i=1,...,n$.
\end{thm}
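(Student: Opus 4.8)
The plan is to exploit the fact that $\mathcal{D}_q(2n)$ is presented by generators and relations, so that each of the three structure maps is determined by its values (\ref{28})--(\ref{30}) on the generators \emph{once we know it is well defined}. Accordingly, the bulk of the argument is to show that the assignments (\ref{28}) and (\ref{29}) extend to algebra homomorphisms and that (\ref{30}) extends to an algebra antihomomorphism; concretely, this means checking that $\Delta$ and $\varepsilon$ (respectively $S$, with the order of the factors reversed) send each defining relation (\ref{22}), (\ref{24}), (\ref{26}) to zero. After that, the Hopf axioms (\ref{1})--(\ref{3}) need only be verified on the generators, since the maps involved are all (anti)homomorphisms.

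First I would treat $\Delta$. On the grouplike generators the relation (\ref{24}) is preserved at once, because $\Delta(\sigma_i)\Delta(\sigma_j)=\sigma_i\sigma_j\otimes\sigma_i\sigma_j$ and the $\sigma$'s commute. The key computation is that $\Delta$ respects (\ref{22}): expanding $\Delta(\partial_i)\Delta(\partial_j)=(\partial_i\otimes\mbox{id}+\sigma_i\otimes\partial_i)(\partial_j\otimes\mbox{id}+\sigma_j\otimes\partial_j)$ gives four terms, and after commuting every $\sigma$ past every $\partial$ by (\ref{26}) one compares with $\eta(\varepsilon_i,\varepsilon_j)\Delta(\partial_j)\Delta(\partial_i)$. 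The two cross terms match thanks to the bicharacter identity $\eta(\varepsilon_i,\varepsilon_j)\eta(\varepsilon_j,\varepsilon_i)=1$ from (\ref{13}), while the outer terms match by applying (\ref{22}) and (\ref{24}) simultaneously in the two tensor legs. An entirely analogous manipulation, again resting on (\ref{13}) and (\ref{24}), shows that $\Delta$ preserves (\ref{26}). For the counit there is nothing to prove beyond noting that $\varepsilon$ lands in the commutative field $\mathbb{C}$, so it automatically kills the $\eta$-commutators appearing in (\ref{22}), (\ref{24}), (\ref{26}).

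The step I expect to be the main obstacle is the well-definedness of the antipode. Because $S$ reverses products, preserving a relation $uv=\lambda vu$ requires verifying $S(v)S(u)=\lambda\,S(u)S(v)$ inside $\mathcal{D}_q(2n)$. Writing $S(\partial_i)=-\sigma_i^{-1}\partial_i$ and $S(\sigma_i)=\sigma_i^{-1}=\sigma_{-\varepsilon_i}$, I would commute the inverse automorphisms through the derivations by (\ref{26}) together with $\eta(-\varepsilon_i,\varepsilon_j)=\eta(\varepsilon_i,\varepsilon_j)^{-1}$ (a consequence of (\ref{10}) and (\ref{12})); the scalar factors then collapse via (\ref{13}) and the $\sigma^{-1}$'s reorder via (\ref{24}), so that each of (\ref{22}), (\ref{24}), (\ref{26}) is carried to zero. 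The delicacy here is purely combinatorial: the order reversal combined with the $\sigma_i^{-1}$ prefactors in (\ref{30}) makes the bookkeeping of the $\eta$-factors the most error-prone part of the proof.

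With the three maps well defined, I would finish by checking (\ref{1})--(\ref{3}) on generators. Coassociativity (\ref{1}) is trivial on $\sigma_i$ and holds on $\partial_i$ because both $(\Delta\otimes\mbox{id})\circ\Delta(\partial_i)$ and $(\mbox{id}\otimes\Delta)\circ\Delta(\partial_i)$ expand to $\partial_i\otimes\mbox{id}\otimes\mbox{id}+\sigma_i\otimes\partial_i\otimes\mbox{id}+\sigma_i\otimes\sigma_i\otimes\partial_i$. The counit axioms (\ref{2}) follow directly from $\varepsilon(\sigma_i)=1$ and $\varepsilon(\partial_i)=0$. For the antipode axiom (\ref{3}) the grouplike case gives $S(\sigma_i)\sigma_i=\sigma_i^{-1}\sigma_i=1=\varepsilon(\sigma_i)$, while on $\partial_i$ one computes $m\circ(S\otimes\mbox{id})\circ\Delta(\partial_i)=S(\partial_i)+S(\sigma_i)\partial_i=-\sigma_i^{-1}\partial_i+\sigma_i^{-1}\partial_i=0$ and $m\circ(\mbox{id}\otimes S)\circ\Delta(\partial_i)=\partial_i+\sigma_iS(\partial_i)=\partial_i-\partial_i=0$, both equal to $\eta\circ\varepsilon(\partial_i)=0$. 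It is precisely the prefactor $\sigma_i^{-1}$ in $S(\partial_i)$ that forces these cancellations, which is why the twisted antipode (\ref{30}) is the correct deformation of $S(\partial_i)=-\partial_i$.
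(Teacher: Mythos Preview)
Your proposal is correct and follows essentially the same route as the paper: verify that $\Delta$, $\varepsilon$, and $S$ descend to $\mathcal{D}_q(2n)$ by checking they preserve the defining relations (\ref{22}), (\ref{24}), (\ref{26}), and then confirm the Hopf axioms on generators. The only cosmetic difference is that the paper singles out the relation (\ref{26}) for an explicit $\Delta$-computation and declares the checks for $\varepsilon$ and $S$ to be ``clearly seen'', whereas you detail the $\Delta$-check for (\ref{22}) and spend more care on the antipode bookkeeping; your version is in fact more thorough on that point.
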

\begin{proof}
 It is first shown that that the commutation relations (\ref{22}), (\ref{24}) and (\ref{26}) are preserved under the actions of $\Delta, \varepsilon$, and $S$. This is clearly seen for $\varepsilon$ and $S$. To check it for $\Delta$, we apply $\Delta$ to the commutation relation $\partial_i\sigma_i-\eta(\varepsilon_i,\varepsilon_j)\sigma_j\partial_i=0$, and we show $\Delta(\partial_i\sigma_i-\eta(\varepsilon_i,\varepsilon_j)\sigma_j\partial_i)=0$ holds as follows:
\begin{equation*}
\begin{aligned}
\Delta(\partial_i\sigma_i-\eta(\varepsilon_i,\varepsilon_j)\sigma_j\partial_i)&= \Delta(\partial_i)\Delta(\sigma_j)-\eta(\varepsilon_i,\varepsilon_j)\Delta(\sigma_j)\Delta(\partial_i) \\
&=(\partial_i\otimes \mbox{id}+\sigma_i\otimes \partial_i)(\sigma_j\otimes \sigma_j) \\
&-\eta(\varepsilon_i,\varepsilon_j)(\sigma_j\otimes \sigma_j)(\partial_i\otimes \mbox{id}+\sigma_i\otimes \partial_i) \\
&=(\partial_i\sigma_j-\eta(\varepsilon_i,\varepsilon_j)\sigma_j\partial_i)\otimes \sigma_j+ \sigma_i\sigma_j\otimes(\partial_i\sigma_j-\eta(\varepsilon_i,\varepsilon_j)\sigma_j\partial_i) \\
&=0.
\end{aligned}
\end{equation*}
Similarly, one can observe that the other relations are also invariant under the action of $\Delta$. Finally, from (\ref{28}), (\ref{29}) and (\ref{30})  it is clear that the above mappings enjoy the Hopf algebra axioms.
\end{proof}

\subsection{A bicovariant differential calculus over $\mathcal{A}_q(n)$}
This subsection is devoted to the construction of a bicovariant differential calculus of $\mathcal{A}_q(n)$ related with $\sigma_i$-derivations $\partial_i,i=1,...,n$.
\begin{thm} 
Let $\Omega^1$ be an ${\mathcal{A}}_{q}(n)$-bimodule with basis elements ${\sf d}x_1,...,{\sf d}x_n $ whose relations with the generators $x_1,...,x_n$ are of the form 
\begin{equation}\label{31}
x_i{\sf d}x_j=\eta(\varepsilon_i,\varepsilon_j){\sf d}x_ix_j, \ i,j=1,...,n. 
\end{equation}
If a mapping ${\sf d}:\mathcal{A}_q(n)\rightarrow \Omega^1$ is defined by
\begin{equation}\label{32}
{\sf d}(f)={\sf d}x_1\partial_1(f)+\cdot \cdot \cdot +{\sf d}x_n\partial_n(f)=
\end{equation}
then the pair $(\Omega^1,{\sf d})$ is a first order differential calculus on $\mathcal{A}_q(n)$.
\end{thm}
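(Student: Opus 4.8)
The plan is to reduce the statement to a single Leibniz identity. By the definition of first order differential calculus recalled in Section~2, and since $\Omega^1$ is assumed to be an $\mathcal{A}_q(n)$-bimodule by hypothesis, the only thing left to prove is that ${\sf d}(fg)={\sf d}(f)g+f{\sf d}(g)$ for all $f,g\in\mathcal{A}_q(n)$. As ${\sf d}$ is $\mathbb{C}$-linear and both sides are bilinear, it is enough to verify this when $f=x^{\alpha}$ ranges over the PBW monomials and $g$ is arbitrary; the general left factor then follows by linear extension. So the entire proof rests on one computation, namely evaluating ${\sf d}(x^{\alpha}g)$.

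First I would expand ${\sf d}(x^{\alpha}g)=\sum_{i}{\sf d}x_i\,\partial_i(x^{\alpha}g)$ using (\ref{32}) and substitute the twisted Leibniz rule (\ref{der}), $\partial_i(x^{\alpha}g)=\partial_i(x^{\alpha})g+\sigma_i(x^{\alpha})\partial_i(g)$, splitting the sum into two parts. The first part $\sum_i{\sf d}x_i\,\partial_i(x^{\alpha})\,g$ is, again by (\ref{32}), exactly ${\sf d}(x^{\alpha})\,g$, so it needs no manipulation. The content is therefore concentrated in the second part $\sum_i{\sf d}x_i\,\sigma_i(x^{\alpha})\,\partial_i(g)$.

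The heart of the argument is to show that this second part equals $x^{\alpha}{\sf d}(g)$. I would first rewrite $\sigma_i(x^{\alpha})=\eta(\alpha,\varepsilon_i)x^{\alpha}$ by (\ref{ato}), and then use the bimodule commutation relation (\ref{31}) to move the monomial $x^{\alpha}$ through ${\sf d}x_i$; iterating the relation over the factors of $x^{\alpha}$ and applying the bicharacter multiplicativity (\ref{10})--(\ref{11}) yields the reciprocal factor $\eta(\varepsilon_i,\alpha)$. Since $\eta(\alpha,\varepsilon_i)\eta(\varepsilon_i,\alpha)=1$ by (\ref{13}), the two $\eta$-powers cancel and the sum collapses to $x^{\alpha}\sum_i{\sf d}x_i\,\partial_i(g)=x^{\alpha}{\sf d}(g)$. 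Combining the two parts gives ${\sf d}(x^{\alpha}g)={\sf d}(x^{\alpha})g+x^{\alpha}{\sf d}(g)$, i.e.\ the Leibniz rule.

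I expect this exact cancellation to be the decisive step rather than a routine one: the bimodule relation (\ref{31}) is engineered precisely so that commuting a monomial past a differential contributes the inverse of the automorphism weight $\eta(\alpha,\varepsilon_i)$ that the twist $\sigma_i$ injects into the deformed Leibniz rule (\ref{der}). The $\sigma$-twist of $\partial_i$ and the $\eta$-commutation of $\Omega^1$ must be exactly reciprocal; any discrepancy in the exponents would leave a surviving power of $q$ and spoil the \emph{undeformed} Leibniz rule required by the definition. To finish, I would check the spanning clause of the definition: from (\ref{21}) one has $\partial_j(x_i)=\delta_{ij}$, hence ${\sf d}(x_i)={\sf d}x_i$, so the bimodule generators are themselves differentials; using (\ref{31}) to push coefficients to one side, every element of $\Omega^1$ is a combination $\sum_i a_i\,{\sf d}(x_i)$, and therefore $(\Omega^1,{\sf d})$ satisfies all requirements of a first order differential calculus.
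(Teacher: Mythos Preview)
Your proposal is correct and follows essentially the same route as the paper: reduce to the Leibniz rule for $f=x^{\alpha}$, apply the twisted Leibniz rule (\ref{der}) for each $\partial_i$, and then use the bimodule relation (\ref{31}) to absorb the automorphism $\sigma_i$. The paper packages the decisive cancellation as the single identity ${\sf d}x_i\,\sigma_i(f)=f\,{\sf d}x_i$ (equation~(\ref{33})), while you unfold it into the explicit product $\eta(\alpha,\varepsilon_i)\eta(\varepsilon_i,\alpha)=1$; these are the same computation. Your additional remark on the spanning clause and the paper's additional verification of bicovariance are both peripheral to the theorem as stated.
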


\begin{proof}
By ${\sf d}:{\mathcal{A}}_{q}(n)\rightarrow \Omega^1$, it is clear that ${\sf d}(x_i)$ =${\sf d}x_i$. Thus, to show that the pair $(\Omega^1, {\sf d})$ is a first order differential calculus, it is sufficient to prove that ${\sf d}(fg)={\sf d}(f)g+f{\sf d}(g)$, where $f=x^{\alpha}$ and $g$ is any element of ${\mathcal{A}}_{q}(n)$. For this goal we first need the following commutation relation obtained by \eqref{31}:
\begin{equation}\label{33}
{\sf d}x_i\sigma_i(f)=f{\sf d}x_i,
 \end{equation}
 By substituting $fg$ into \eqref{32} and taking into account \eqref{21}, we have the following
\begin{equation}\label{34}
{\sf d}(fg)={\sf d}x_1( \partial_1(f)g+\sigma_1(f)\partial_1(g))+\cdot \cdot \cdot +{\sf d}x_n( \partial_n(f)g+\sigma_n(f)\partial_n(g)).
\end{equation}
Making use \eqref{33} in \eqref{34}, we have 
\begin{equation}\label{35}
{\sf d}(fg)=({\sf d}x_1\partial_1+\cdot \cdot \cdot+{\sf d}x_n\partial_n)(f)g+f({\sf d}x_1\partial_1+\cdot \cdot \cdot+{\sf d}x_n\partial_n)(g),
\end{equation}
which turns out that ${\sf d}(fg)= {\sf d}(f)g+f{\sf d}(g)$. Thus the pair $(\Omega^1, {\sf d})$ is a first order differential calculus on ${\mathcal{A}}_{q}(n)$. 
Morever, since ${\mathcal{A}}_{q}(n)$ is a Hopf algebra, using Woronowicz's approach, we can define the right covariant bimodule structure by a mapping $\Delta_R:\Omega^1 \rightarrow \Omega^1 \otimes {\mathcal{A}}_{q}(n)$, defined as $\Delta_R=({\sf d}\otimes \mbox{id})\circ \Delta $, and the left covariant bimodule structure by a mapping $\Delta_L:\Omega^1 \rightarrow \mathcal{A}_q(n) \otimes\Omega^1$, given by $\Delta_L=(\mbox{id}\otimes {\sf d})\circ \Delta$. Note that $\Delta_R$ and $\Delta_L$ act on ${\mathcal{A}}_{q}(n)$ as the coproduct given in \eqref{16}. Finally, it is remain to show that the mappings $\Delta_L$ and $\Delta_R$ preserve the commutation relations \ref{31}. For example, in the case $1<i \leq n, 1\leq j \leq n$, for $\Delta_R$, we can see this from the following straightforward calculation:
\begin{equation}
\begin{aligned}
\Delta_R(x_i{\sf d}x_j)&=(x_1\otimes x_i+x_i\otimes x_1)({\sf d}x_1\otimes x_j+{\sf d}x_j\otimes x_1) \\
&=x_1{\sf d}x_1\otimes x_ix_j+x_1{\sf d}x_j\otimes x_ix_1+x_i{\sf d}x_1\otimes x_1x_j+x_i{\sf d}x_j\otimes x_1x_1 \\
&=\eta(\varepsilon_i,\varepsilon_j)(({\sf d}x_1\otimes x_j+{\sf d}x_j\otimes x_1)(x_1\otimes x_i+x_i\otimes x_1)\\
&=\eta(\varepsilon_i,\varepsilon_j)\Delta_R({\sf d}x_j)\Delta_R(x_i). 
\end{aligned}
\end{equation}
In the other cases, it can be similarly shown for both $\Delta_R$ and $\Delta_L$. Thus the differential calculus $(\Omega^1, {\sf d})$ is a bicovariant one on ${\mathcal{A}}_{q}(n)$.
\end{proof}
Note that the action of ${\sf d}$ on a negative integer power of $x_1$ is computed by using ${\sf d}(x_1^{-1}):=-{\sf d}(x_1)x_1^{-2}$. 
Based on the above differential calculus $(\Omega^1, {\sf d})$, one can extend the differential operator ${\sf d}$ to the exterior differential operator with the well known properties
\begin{equation}\label{36}
\mathcal{A}\cong \Omega_{0}\stackrel{{\sf d}}{\rightarrow}\Omega_{1}\stackrel{{\sf d}}{\rightarrow}\cdot \cdot \cdot\stackrel{{\sf d}}{\rightarrow}\Omega_{n}\stackrel{{\sf d}}{\rightarrow}\Omega_{n+1}\stackrel{{\sf d}}{\rightarrow}\cdot \cdot \cdot
\end{equation}
\begin{equation}\label{37}
{\sf d}\circ {\sf d}:={\sf d}^2 = 0
\end{equation} 
\begin{equation}\label{38}
 {\sf d}(u\wedge v) = ({\sf d}u)\wedge v + (-1)^{k}u \wedge({\sf d} v), 
\end{equation}
where $u \in \Omega_k $ and $\Omega_{k}$ is the space of differential $k$-forms. Thus, taking into account \eqref{31} with \eqref{37} and\eqref{38}, we get the relation of differentials ${\sf d}x_i$ and ${\sf d}x_j, i,j=1,...,n$ as follows:
\begin{equation}\label{39}
{\sf d}x_i\wedge {\sf d}x_j=(\delta_{ij}-\eta(\varepsilon_i,\varepsilon_j)){\sf d}x_j\wedge {\sf d}x_i.
\end{equation}
At this position we note that the above relation is consistent with the nilpotency rule \eqref{37} when we take into account ${\sf d}:=({\sf d}x_1\partial_1+\cdot \cdot \cdot +{\sf d}x_n\partial_n)$.

As the final part of this section we can obtain the relation of mapping $\partial_i$ and generator $x_j$ by using the Leibniz property of ${\sf d}$ with the differential calculus given by \eqref{31}
\begin{equation}\label{40}
\partial_ix_j=\delta_{ij}+\eta(\varepsilon_j, \varepsilon_i)x_j\partial_{i}, 1\leq i,j \leq n,
\end{equation}
which complete the scheme of Weyl algebra corresponding to ${\mathcal{A}}_q(n)$ together with the relation \eqref{22}.

\section{ Space of Maurer-Cartan 1-Forms on ${\mathcal{A}}_q(n)$} 
In the framework of the Hopf algebra ${\mathcal{A}}_q(n) $, the right-invariant Maurer-Cartan
form for any  $f\in {\mathcal{A}}_q(n)$ is defined by through the formula \cite{journal-3:1989}
\begin{equation}\label{41}
 w_f:=m(( {\sf d}\otimes S) \Delta(f))
 \end{equation}
where $m$ stands for the multiplication. Thus, for the noncommuting
coordinates of $ {\mathcal{A}}_q(n)$,
 \begin{equation*}
\begin{aligned}
 \omega_{x_1}& = m(( {\sf d}\otimes S) \Delta(x_1))= m(( {\sf d}\otimes S)(x_1\otimes
x_1)) = m({\sf d}(x_1)\otimes S(x_1))=  {\sf d}x_1x_1^{-1}, \nonumber \\
\omega_{x_i} &= m(( {\sf d}\otimes S) \Delta(x_i))= m(( {\sf d}\otimes S)(x_1\otimes
x_i+x_i\otimes x_1))={\sf d}x_i ~x_1^{-1}- {\sf d}x_1 ~x_1^{-1}x_ix_1^{-1},
\end{aligned}
\end{equation*}
where $1<i \leq n$.
Let $\omega_i=\omega_{x_i}$. Then \eqref{41} implies that for any $f\in {\mathcal{A}}_q(n)$, $\omega_f$ could be written
as a linear combination of  all $\omega_i$'s of the form $\omega_f=f_1\omega_1+\cdot \cdot \cdot +f_n \omega_n$ where $f_i \in {\mathcal{A}}_q(n), i=1,2,3$. Now, using \eqref{14},  \eqref{31} and \eqref{39}, we obtain some relations, which will be used in the following sections, such
as commutation relations of $\omega_i$'s with the generators $x_i$'s as
\begin{equation}\label{42}
\begin{aligned}
&x_i \omega_{1} = \omega_{x_1} x_i, ~ 1\leq i \leq n \\
&  x_i \omega_{j}= q^{j-1} \omega_ {j}x_i,~1\leq i\leq n,~1<j \leq n,
	\end{aligned} 
	\end{equation}
and one between any $\omega_{i}$ and $\omega_{j}$ as follows:

\begin{equation}\label{43}
	\omega_{i} \wedge \omega_j=-(1-\delta_{ij})\omega_{j} \wedge \omega_{i}
 \end{equation}
\section{Vector Fields Corresponding to the Maurer-Cartan Forms} 
In this section we will construct the quantum Lie algebra
of vector fields, denoted by $\mathcal{T}$, corresponding to the right-invariant Maurer-Cartan
forms obtained in the Section 4. In order to obtain the generators of $\mathcal{T}$ in
terms of the derivation mappings $\partial_i$  we first write the
Maurer-Cartan forms as follows
\begin{equation}\label{44}
{\sf d}x_1 = \omega_{1}x_1, ~{\sf d}x_i = \omega_{1}x_i + \omega_{i}x_1, ~ 1<i \leq n
\end{equation}
 Let us consider the
differential ${\sf d}$ in terms of the Maurer-Cartan forms and
 the generators of $ \mathcal{T}$ :
\begin{equation}\label{45}
{\sf d} =\sum_{i=1}^n \omega_{i}T_{i} 
\end{equation}
 where $T_{i}$'s are  generators of  $\mathcal{T}$. By inserting \eqref{44} to
the expression
\begin{equation}\label{46}
{\sf d}=\sum_{i=1}^n{\sf d}x_i\partial_i\nonumber
\end{equation}
we obtain the generators as the following vector fields:
\begin{equation}\label{47}
\begin{aligned}
&T_{1} \equiv \sum_{i=1}^n x_i \partial_{i} \\
&T_{i}\equiv x_1\partial_{i}, ~ 1<i \leq n
\end{aligned}
\end{equation}
 Together with the relations \eqref{14}, \eqref{22} and \eqref{40}, \eqref{47} implies that commutation relation between two generators $T_i$ and $T_j$ is of the form
\begin{equation}\label{48}
T_{i} T_{j} - T_{j} T_{i}  = 0,~ 1\leq i,j \leq n
\end{equation}
The commutation relation \eqref{48} must be
compatible with monomials of the algebra $\mathcal{A}$. To realize this, we
get the following commutation relations between the generators of $\mathcal{T}$
and the coordinates $x_i$'s due to the relations in \eqref{14} and \eqref{40} :
\begin{equation*}
\begin{aligned}
&T_{1}~x_j = x_j + x_j~T_{1}, ~ 1\leq j \leq n \\
&T_{i}~x_j = \delta_{ij}x_1+q^{i-1}~x_j~T_{i}, ~1< i\leq n,~1\leq j \leq n 
\end{aligned}
\end{equation*}
\begin{lem}
Let $f$ be any monomial of the form $x^{\alpha}$ in ${\mathcal{A }}_q(n)$. For any $g \in {\mathcal{A}}_q(n)$,  the vector fields $T_{i}$'s have the following $q$-deformed Leibniz rules when they act on $fg$ :
\begin{equation}\label{50}
T_{i}\left(fg\right)=T_{i}\left(f\right)~g+q^{\lambda_i}fT_{i}\left(g\right),~\lambda_i=(i-1)\sum_{k=1}^n\alpha_k
\end{equation}
\end{lem}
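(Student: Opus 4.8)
The plan is to reduce the rule for each $T_i$ to the twisted Leibniz rule \eqref{der} for the individual derivations $\partial_j$, and then to transport the resulting automorphism factors $\sigma_j(f)$ across the coordinate $x_j$ appearing in \eqref{47}, keeping careful track of the two independent powers of $q$ that arise. Since $T_1=\sum_{j}x_j\partial_j$ and $T_i=x_1\partial_i$ for $1<i\le n$ have different shapes, I would treat the two cases separately. Throughout I would write $f=x^\alpha$ and use the monomial commutation rule $x^\alpha x^\beta=\eta(\alpha,\beta)x^\beta x^\alpha$ together with \eqref{ato}, which gives $\sigma_j(f)=\eta(\alpha,\varepsilon_j)f$.

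For the case $1<i\le n$, I would start from $T_i(fg)=x_1\partial_i(fg)$ and apply \eqref{der} to obtain $x_1\partial_i(f)g+x_1\sigma_i(f)\partial_i(g)$. The first summand is exactly $T_i(f)g$, so the whole content of the statement sits in the second summand. There I would replace $\sigma_i(f)$ by $\eta(\alpha,\varepsilon_i)f$ and then commute $x_1$ past $f$, which by \eqref{14} produces the scalar $\eta(\varepsilon_1,\alpha)$ and leaves $f x_1\partial_i(g)=f T_i(g)$. The point is then to show $\eta(\alpha,\varepsilon_i)\eta(\varepsilon_1,\alpha)=q^{\lambda_i}$. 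Evaluating $\ast$ on basis vectors (the formulas for $\varepsilon_i\ast\beta$ and $\beta\ast\varepsilon_i$ recorded in Section 2) gives $\eta(\alpha,\varepsilon_i)=q^{\sum_s(i-s)\alpha_s}$ and $\eta(\varepsilon_1,\alpha)=q^{\sum_s(s-1)\alpha_s}$, and adding the exponents telescopes to $\sum_s(i-1)\alpha_s=(i-1)\sum_k\alpha_k=\lambda_i$, as required.

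For $i=1$, I would expand $T_1(fg)=\sum_j x_j\partial_j(fg)$, apply \eqref{der} termwise, and again isolate $\sum_j x_j\sigma_j(f)\partial_j(g)$ after splitting off $T_1(f)g$. Here commuting $x_j$ through $f$ in the $j$-th term introduces the factor $\eta(\varepsilon_j,\alpha)$, so each term carries the scalar $\eta(\alpha,\varepsilon_j)\eta(\varepsilon_j,\alpha)$, which equals $1$ by the bicharacter identity \eqref{13}. Thus every term reduces to $f x_j\partial_j(g)$, and resumming gives $f T_1(g)$; this is precisely \eqref{50} with $\lambda_1=(1-1)\sum_k\alpha_k=0$.

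The main obstacle is purely bookkeeping: one must combine the two separate $q$-contributions, one coming from the automorphism $\sigma_j$ built into \eqref{der} and the other from commuting $x_j$ past the monomial $f$, and verify that they collapse to the stated exponent. The cleanest way to guarantee this is to keep both factors in the bicharacter form $\eta(-,-)$ for as long as possible, invoking \eqref{13} in the $T_1$ case (where they cancel) and the explicit evaluations of $\ast$ on $\varepsilon_1$ and $\varepsilon_i$ for the remaining $T_i$ (where they add to $(i-1)\sum_k\alpha_k$), rather than expanding the exponents prematurely.
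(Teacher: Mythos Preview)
Your argument is correct and complete: the bicharacter computations are right, and in particular the telescoping of $\eta(\alpha,\varepsilon_i)\eta(\varepsilon_1,\alpha)$ to $q^{(i-1)\sum_k\alpha_k}$ and the cancellation $\eta(\alpha,\varepsilon_j)\eta(\varepsilon_j,\alpha)=1$ in the $T_1$ case both go through exactly as you describe.

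The paper, however, takes a different route. Instead of working directly with the explicit expressions \eqref{47} and the twisted Leibniz rule \eqref{der} for the $\partial_j$, it stays at the level of the Maurer--Cartan forms: from the commutation relations \eqref{42} one reads off $f\omega_i=q^{\lambda_i}\omega_i f$ for the monomial $f=x^\alpha$, and then one simply expands the ordinary Leibniz rule $\mathsf{d}(fg)=\mathsf{d}(f)g+f\,\mathsf{d}(g)$ in the basis $\mathsf{d}=\sum_i\omega_i T_i$ and compares coefficients of $\omega_i$. This packages both $q$-contributions you track separately (the $\sigma_i$ twist and the commutation of $x_1$ or $x_j$ past $f$) into the single commutation relation $f\omega_i=q^{\lambda_i}\omega_i f$, so the uniform exponent $\lambda_i=(i-1)\sum_k\alpha_k$ appears immediately without any explicit bicharacter calculation. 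Your approach, by contrast, is more self-contained---it does not rely on having already set up the bicovariant calculus $(\Omega^1,\mathsf{d})$ or the relations \eqref{42}---and it makes transparent \emph{why} the exponent takes the form it does, at the cost of handling $T_1$ and $T_i$ ($i>1$) separately and doing the bicharacter arithmetic by hand.
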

\textbf{Proof:}
 The relations in \eqref{42} result in the following relation between the form $\omega_i$ and the monomial $f$
\begin{equation}\label{51}
f\omega_{i}=q^{\lambda_i} \omega_{i}f. 
\end{equation}
From the Leibniz rule and the exterior differential operator ${\sf d}$ given by \eqref{45}, we have 
\begin{equation}\label{52}
\left(\sum_{i=1}^n{\omega_{i}T_{i}} \right)\left(fg\right)=\left(\sum_{i=1}^n{\omega_{i}T_{i}} \right) \left( f \right)g +f\left(\sum_{i=1}^n{\omega_{i}T_{i}} \right)(g)
\end{equation}
Inserting \eqref{51} to \eqref{52} and collecting according to $\omega_{i}$, we obtain
\begin{equation}\label{53}
\left(\sum_{i=1}^n{\omega_{i}T_{i}} \right)\left(fg\right)=\sum_{i=1}^n{\omega_{i}\left(T_{i} ( f )g +q^{\lambda_i}fT_{i} (g)\right)}
\end{equation}
which results in \eqref{50}.

\begin{thm}
We have the following $q$-deformed coproduct for the vector fields $T_{i}$, which is consistent with the $q$-deformed Leibniz rule \eqref{50}:
\begin{equation}\label{54}
 \Delta(T_{i})  =  T_{i} \otimes {\bf 1} +  q^{(i-1)T_{1}} \otimes T_{i}
 \end{equation}
\end{thm}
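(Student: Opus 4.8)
The plan is to pin down $\Delta(T_i)$ by the same mechanism used for $\mathcal{D}_q(2n)$, namely the characterizing identity $m\big(\Delta(T_i)(f\otimes g)\big)=T_i(fg)$ for all $f,g\in\mathcal{A}_q(n)$, evaluated with the usual tensor multiplication $(a\otimes b)(c\otimes d)=ac\otimes bd$ and the componentwise action $(A\otimes B)(f\otimes g)=A(f)\otimes B(g)$. Reading the proposed formula $\Delta(T_i)=T_i\otimes\mathbf{1}+q^{(i-1)T_1}\otimes T_i$ through this identity should reproduce exactly the $q$-deformed Leibniz rule \eqref{50}, and this is precisely what \emph{consistency} means here. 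By linearity it suffices to verify the identity when $f=x^{\alpha}$ is a monomial, which is the situation in which \eqref{50} is stated.

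The crux is to interpret the group-like factor $q^{(i-1)T_1}$ as an operator on $\mathcal{A}_q(n)$. First I would observe that $T_1=\sum_k x_k\partial_k$ is the total-degree (Euler) operator: from the relation $T_1x_j=x_j+x_jT_1$ derived just above the Lemma, together with $T_1(1)=0$, an induction on degree gives $T_1(x^{\alpha})=\big(\sum_k\alpha_k\big)x^{\alpha}$ for every monomial. Consequently $q^{(i-1)T_1}$ acts on the homogeneous element $f=x^{\alpha}$ as multiplication by the scalar $q^{(i-1)\sum_k\alpha_k}=q^{\lambda_i}$, which is well defined because $\mathcal{A}_q(n)$ is graded by total degree. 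This identifies the exponent $\lambda_i$ appearing in \eqref{50} with the action of $q^{(i-1)T_1}$.

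With this in hand the verification is a direct expansion. Acting on $f\otimes g$ I would compute
\[
\big(T_i\otimes\mathbf{1}+q^{(i-1)T_1}\otimes T_i\big)(f\otimes g)=T_i(f)\otimes g+q^{\lambda_i}f\otimes T_i(g),
\]
and then apply $m$ to obtain $T_i(f)\,g+q^{\lambda_i}f\,T_i(g)$, which by \eqref{50} equals $T_i(fg)$. Hence the characterizing identity holds and the coproduct \eqref{54} is consistent with \eqref{50}. As a sanity check, the case $i=1$ gives $q^{(1-1)T_1}=\mathbf{1}$, so $\Delta(T_1)=T_1\otimes\mathbf{1}+\mathbf{1}\otimes T_1$ is primitive, matching $\lambda_1=0$ and the ordinary Leibniz rule for the Euler operator; this primitivity also makes each $q^{(i-1)T_1}$ genuinely group-like.

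The main obstacle I anticipate is not the algebra but making the factor $q^{(i-1)T_1}$ rigorous: one must justify that exponentiating the degree operator is legitimate (it is, since it acts diagonally on the degree grading) and that the resulting element is group-like, so that $\Delta$ extends to an algebra map compatible with the relations \eqref{48}. Once $T_1$ is recognized as the degree operator and $q^{(i-1)T_1}$ is matched to the scalar $q^{\lambda_i}$, the remaining matching against \eqref{50} is routine.
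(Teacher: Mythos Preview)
Your proposal is correct and follows essentially the same route as the paper: use the defining identity $m(\Delta(T_i)(f\otimes g))=T_i(fg)$ with the componentwise tensor action, recognize $T_1$ as the total-degree operator so that $q^{(i-1)T_1}f=q^{\lambda_i}f$ on a monomial $f=x^{\alpha}$, and then read off \eqref{54} from the Leibniz rule \eqref{50}. Your write-up is in fact more detailed than the paper's, which only states these ingredients and the eigenvalue identity for $T_1$.
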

\textbf{Proof:}
If we consider tensor product of  the form
\begin{equation}\label{55}
(X\otimes Y)(f\otimes g)=X(f)\otimes Y(g) ,~ f,g \in {\mathcal{A}},~ X,Y \in \mathcal{T}
\end{equation}
and $m(\Delta(X)(f\otimes g)):=X(fg)$, then we have, from the $q$-deformed Leibniz rule \eqref{50}, the q-deformed coproduct \eqref{55}. Notice that by the action \eqref{21},
 the vector field $T_1$ acts on the monomial $f=x^{\alpha}$
as follows
\begin{equation}\label{56}
{T_1}(f)=\left (\sum_{i=1}^n{x_i \partial_{x_i}} \right)(f)=\left( \sum_{i=1}^n{\alpha_i}\right).f\nonumber
\end{equation}.
Since the action rule \eqref{21} holds also for a negative power of $x_1$, the action of $T_i$ can be extended to a negative power of $x_1$. 

Finally, in order to introduce Hopf algebra for the universal enveloping algebra $\mathcal{U}(\mathcal{T})$, we obtain the counit and antipode corresponding to the coproduct given in \eqref{54} as 
\begin{equation}\label{57}
\begin{aligned}
&\varepsilon(T_i)  =  0, ~1\leq i \leq n \\
& S(T_i) =  - {q^{-(i-1)T_1}} T_i,~1\leq i \leq n
\end{aligned}
\end{equation}
 Notice that the Hopf algebra derived by \eqref{54} is $q$-deformed version of the usual Hopf algebra with the primitive coproduct $\Delta(T_i)=T_i\otimes {\bf 1}+{\bf 1}\otimes T_i$ obtained in the classical case $q=1$.
\section{Conclusion} 
In this study, a quantum $n$-space whose commutation relations are in the sense of Manin has been presented in such a manner that the relevant algebra can be equipped by a cocommutative Hopf algebra structure. Morever some linear mappings of the quantum $n$-space are described in such a way that these mappings reduce to the usual derivation operators in the case of the classical limit $q\rightarrow 1$, and this situation can be expressed by $\sigma$-twisted derivations where $\sigma$ is an algebra automorphism. Further, we show that a bicovariant differential calculus on the quantum $n$-space can be constructed via these twisted derivations. Finally, the right invariant Maurer-Cartan forms and the corresponding vector fields are given, and it is seen that the algebra of these vector fields has a non-cocommutative Hopf algebra structure. 
\section{Acknowledgments}
I would like to express my deep gratitude to Turkish Scientific and Technical
Research Council for supporting in part this work and the
referees for their valuable suggestions concerning the material in this paper.

\end{document}